\documentclass[12pt, twoside, fleqn, a4paper]{article}

\usepackage{latexsym}  %Extra math symbols
\usepackage{amscd}    %AMS package for doing commutative diagrams
\usepackage{theorem}  %Improved theoremlike environments
\usepackage{pifont}  %Dingbat symbols (used for end of proof symbol)
\usepackage{mathbbol} %Blackboard bold symbols
\usepackage{amsfonts}  %More fonts
\usepackage{xspace}  %Removes need for adding a \  after macros
\usepackage{amssymb} %More math symbols
\usepackage{fancyhdr} %Fancyheadings package
\usepackage{mathrsfs} %For the 3rd letters
\usepackage{amsmath} %Math symbols
\usepackage{graphics} %Picture inclusions
\usepackage{graphicx} %Picture inclusions
\usepackage{euscript}
\usepackage{psfrag}
\usepackage{empheq} %Improved equation environments 
\usepackage{mathabx} %Includes a few more stylistic arrows
\usepackage[parfill]{parskip}     %Activate to begin paragraphs with an empty line rather than an indent
\usepackage[colorlinks=true, allcolors=blue]{hyperref}
\usepackage{enumitem}

\title{Topological pressure for holomorphic correspondences using open covers }
\author{Subith Gopinathan\footnote{Indian Institute of Science Education and Research Thiruvananthapuram (IISER-TVM), Maruthamala P.O., Vithura, Kerala, India. PIN 695 551.\ \ email: \texttt{subith21@iisertvm.ac.in}, \ \texttt{subithjimail@gmail.com}} }

\DeclareFontFamily{OT1}{pzc}{}
\DeclareFontShape{OT1}{pzc}{m}{it}%
              {<-> s * [0.900] pzcmi7t}{}
\DeclareMathAlphabet{\mathpzc}{OT1}{pzc}%
                                 {m}{it}

{\theorembodyfont{\slshape} \newtheorem{theorem}{Theorem}[section]}
{\theorembodyfont{\slshape} \newtheorem{definition}[theorem]{Definition}}
{\theorembodyfont{\slshape} \newtheorem{lemma}[theorem]{Lemma}}
{\theorembodyfont{\slshape} \newtheorem{proposition}[theorem]{Proposition}}
{\theorembodyfont{\slshape} }
{\theorembodyfont{\slshape} \newtheorem{corollary}[theorem]{Corollary}} 
{\theorembodyfont{\slshape} } 
{\theorembodyfont{\slshape} }
{\theorembodyfont{\slshape} }

\numberwithin{equation}{section}

\newenvironment{proof}{\paragraph{Proof:}}{\hfill$\bullet$}

\begin{document}

\maketitle 

\begin{abstract} 
In this paper, we define the topological pressure of continuous functions with respect to the holomorphic correspondences using the open covers of the Riemann sphere. Further, we show that this method coincides with the existing definition of pressure that uses the notion of  separated and spanning family of orbits. 
\end{abstract}

\begin{tabular}{l l} 
{\bf Keywords}: & Holomorphic correspondence \\
& Open covers for holomorphic correspondence\\
& pressure for holomorphic correspondence\\
& Entropy of holomorphic correspondence \\ \\ 
& \\ 
{\bf MSC Subject} &     37D35, 37A35, 37F05 \\ 
{\bf Classifications} & \\ 
\end{tabular} 
\bigskip 

\newpage 

\section{Introduction}
 Thermodynamical formalism provides a powerful way to understand the complexity of dynamical systems. The concepts of entropy, pressure, Ruelle operator, \emph{etc}, are well explored in the case of maps. In the context of complex dynamics, holomorphic correspondences serve as a natural generalization of rational maps; hence, in recent years, the thermodynamical properties of holomorphic correspondences have emerged as a topic of interest. In \cite{ds:2008}, the entropy of holomorphic correspondences has been introduced with the help of  separated family of orbits that serve as a generalization of the concept for maps as given by Bowen in \cite{rb:1971}. Some recent papers that explore deeper aspects of the same include \cite{bs:2021, KS:2022, LLZ:2024}. Then, in \cite{SS:2025}, the authors introduced the concept of topological pressure of continuous functions with respect to holomorphic correspondences, defining it using the notions of spanning and separated families of orbits. They also develop a variational principle for holomorphic correspondences that relates the pressure and entropy in appropriate spaces. Further, they define a Ruelle operator for holomorphic correspondences and discuss the relation between its spectral properties and the pressure functional in the context. In \cite{SS:2026}, the allocation of measure-theoretic entropy for a class of measures has been chosen as the focal point of study, and this paves the way for understanding the pressure functional in a different approach. In the same paper, the authors provided an equivalent formulation of the variational principle and a measure-theoretic clarity for the convergence of the Ruelle operator. In this work, we focus on defining the topological pressure of real-valued continuous functions with respect to the holomorphic correspondences on the Riemann sphere using the open covers of the space. Further, we obtain a different formula for the entropy of holomorphic correspondence. We show that these coincide with the existing definition of the respective notions. 

\section{Preliminaries}

\begin{definition}
Let $\widehat{\mathbb{C}}$ denote the Riemann sphere. Then 
a holomorphic correspondence on $\widehat{\mathbb{C}}$ is a formal linear combination of the form
\begin{equation}
\label{correspondence}
F \;=\; \sum_{1 \le t \le N} \, F_t,
\end{equation}
where  $F_t$'s are
irreducible complex-analytic sub-varieties (repeated according to multiplicity) of pure dimension $1$ in  $\widehat{\mathbb{C}} \times \widehat{\mathbb{C}} $
that satisfy the following conditions:
\begin{enumerate}
    \item For each $F_t$, the restrictions $\pi_1|_{F_t}$ and $\pi_2|_{F_t}$ 
    are surjective;
    \item For each $x$  and $y \in \widehat{\mathbb{C}}$, the sets
    \[
    \pi_2(\pi_1^{-1}(\{x\}) \cap F_t)
    \quad\text{and}\quad
    \pi_1(\pi_2^{-1}(\{y\}) \cap F_t)
    \]
    are finite for every $1 \le t \le N$,
\end{enumerate}
where $\pi_i$ is the projection onto $\widehat{\mathbb{C}}$ for $i=1,2$.
\end{definition}

From \cite{bs:2016}, one can see that the composition of correspondences is well-defined, and from this, we can talk about the orbits arising from iterations.\\

Suppose, for $1 \leq t \leq N$ and a generic $z \in  \widehat{\mathbb{C}}$, let us denote $\delta_{t} (z) = \# \{ w \in \widehat{\mathbb{C}} : (w,z) \in \Gamma_{t} \}$ and $\lambda_{t}(z)= \# \{ w \in \widehat{\mathbb{C}} : (z, w) \in \Gamma_{t} \}$.
Now, for $k \in \mathbb{Z}_{+}$, let $\mathcal{W}_{k}$ denote the cylindrical set of all sequences of length $k$ with entries in $\{1,2,\dots,N\}$.
Then, corresponding to a given word, $\boldsymbol{\alpha} = (a_{1},\dots,a_{k}) \in \mathcal{W}_{k}$ and a point $z_{0} \in \widehat{\mathbb{C}}$, we define the collection of admissible forward paths of length $k$  by

\begin{eqnarray*} 
\mathscr{P}_{k}^{\boldsymbol{\alpha}} \left( z_0 \right) & = & \Bigg\{ \left( z_{0}, z^{(1)}_{j_{1}}, \cdots, z^{(k)}_{j_{k}};\; \alpha_{1}, \cdots, \alpha_{k} \right)\ :\ \left( z^{(r - 1)}_{j_{r - 1}}, z^{(r)}_{j_{r}} \right) \in \Gamma_{\alpha_{r}}\ \text{where}\ z^{(0)}_{j_{0}} = z_{0} , \\ 
& &\ 1 \le j_{r} \le \lambda_{\alpha_{r}} \left( x^{(r - 1)}_{j_{r - 1}} \right)\ \text{for}\ 1 \le r \le k \Bigg\}. 
\end{eqnarray*} 

With this, we define the whole set of permissible forward orbits of length $k$ generated by $F$ as
\[
\mathcal{P}_{k}^{F}(\widehat{\mathbb{C}})
=
\bigcup_{\boldsymbol{\alpha} \in \mathcal{W}_{k}}
\ \bigcup_{z_{0} \in \widehat{\mathbb{C}}}
\mathcal{P}_{k}^{\boldsymbol{\alpha}}(z_{0}).
\]
Let, 
$\mathbf{X}_{k}^{+}(z_{0};\boldsymbol{\alpha})_{\boldsymbol{j}}$ 
 denote
a general element in $\mathcal{P}_{k}^{F}(\widehat{\mathbb{C}})$, where $z_{0} \in \widehat{\mathbb{C}},\ \boldsymbol{\alpha} = \left( \alpha_{1}, \cdots, \alpha_{n} \right)$ and $\boldsymbol{j} = \left( j_{1}, \cdots, j_{n} \right)$. Assigning the product metric to $\mathcal{P}_{k}^{F}(\widehat{\mathbb{C}})$, we can talk about the following continuous functions defined from the compact space $\mathcal{P}_{k}^{F}(\widehat{\mathbb{C}})$.
\[
\Pi_i : \mathcal{P}_{k}^{F}(\widehat{\mathbb{C}}) \xrightarrow{} \widehat{\mathbb{C}} \ \text{as} \ 
\Pi_i\left( \mathbf{X}_{k}^{+}(z_{0};\boldsymbol{\alpha})_{\boldsymbol{j}}\right) = z^{(i)}_{j_{i}} \ \text{for } \ 0\leq i \leq k
\]
\[
\text{proj}_i : \mathcal{P}_{k}^{F}(\widehat{\mathbb{C}}) \xrightarrow{} \{1,2,...N\} \ \text{as} \ 
\text{proj}_i \left( \mathbf{X}_{k}^{+}(z_{0};\boldsymbol{\alpha})_{\boldsymbol{j}} \right) = \alpha_i \ \text{for } \ 1\leq i \leq k.
\]
In this work, we discuss the thermodynamical properties of holomorphic correspondences. With $d_{\widehat{\mathbb{C}}}$ denoting the spherical metric in the Riemann sphere, next we mention the concept of separated sets and spanning sets in $\mathcal{P}_{k}^{F}(\widehat{\mathbb{C}})$.

\begin{definition} \cite{SS:2025}
\label{spanning}
Let $F$ be a holomorphic correspondence defined on $\widehat{\mathbb{C}}$, as in
Equation \eqref{correspondence}, and let
$\mathcal{P}_{k}^{F}(\widehat{\mathbb{C}})$ denote the collection of all
permissible forward orbits of length $k$ generated by $F$.
Given $\epsilon > 0$, a subset
$\mathcal{V}\subseteq \mathcal{P}_{k}^{F}(\widehat{\mathbb{C}})$ is called an
\emph{$(k,\epsilon)$-spanning set} of
$\mathcal{P}_{k}^{F}(\widehat{\mathbb{C}})$ if for every
$\mathbf{X}_{k}^{+}(z_{0};\boldsymbol{\alpha})_{\boldsymbol{j}}
\in \mathcal{P}_{k}^{F}(\widehat{\mathbb{C}})$
there exists a point $w_{0} \in \widehat{\mathbb{C}}$ such that
$
\mathbf{X}_{k}^{+}(w_{0};\boldsymbol{\alpha})_{\boldsymbol{h}} \in \mathcal{V}
$
and
$
d_{\widehat{\mathbb{C}}}
\Big(
\Pi_{i}\big(\mathbf{X}_{k}^{+}(z_{0};\boldsymbol{\alpha})_{\boldsymbol{j}}),
\Pi_{i}\big(\mathbf{X}_{k}^{+}(w_{0};\boldsymbol{\alpha})_{\boldsymbol{h}} \big)
\Big)
< \epsilon
\quad \text{for all } 0 \le i \le k .
$
\end{definition}

\begin{definition}{\cite{ds:2008}}
\label{separated}
Let $F$ be a holomorphic correspondence defined on $\widehat{\mathbb{C}}$, as in
Equation \eqref{correspondence}, and let
$\mathcal{P}_{k}^{F}(\widehat{\mathbb{C}})$ be the set of all permissible
forward orbits of length $k$ generated by $F$. 
Given $\epsilon > 0$, a subset
$\mathcal{Y}\subseteq \mathcal{P}_{k}^{F}(\widehat{\mathbb{C}})$
is called \emph{$(k,\epsilon)$-separated} if for every pair of distinct points in
$\mathcal{Y}$ say
$\mathbf{X}_{k}^{+}(z_{0};\boldsymbol{\alpha})_{\boldsymbol{j}}, 
\mathbf{X}_{k}^{+}(w_{0};\boldsymbol{\beta})_{\boldsymbol{h}}$,
we have
\begin{eqnarray*}
\text{either} & &
d_{\widehat{\mathbb{C}}}
\left(
\Pi_{i}\left(\mathbf{X}_{k}^{+}(z_{0};\boldsymbol{\alpha} \right)_{\boldsymbol{j}},
\Pi_{i}\left(\mathbf{X}_{k}^{+}(w_{0};\boldsymbol{\beta})_{\boldsymbol{h}} \right)
\right)
> \epsilon
\quad \text{for some } 0 \le i \le k,
\\
\text{or} & &
\mathrm{proj}_{i+1}\left(\mathbf{X}_{k}^{+}\left(z_{0};\boldsymbol{\alpha}\right)_{\boldsymbol{j}}\right)
\neq
\mathrm{proj}_{i+1}\left(\mathbf{X}_{k}^{+}\left(w_{0};\boldsymbol{\beta}\right)_{\boldsymbol{h}}\right)
\quad \text{for some } 1 \le i \le k .
\end{eqnarray*}

\end{definition}
The terms mentioned in  Definition \ref{spanning} and \ref{separated} serve as the generalization of the analogous ideas as in the case of maps and with this information, for a continuous function $g \in \mathcal{C}\left(\widehat{\mathbb{C}},\mathbb{R}\right)$ and a given $\epsilon > 0$, we mention two more quantities as follows :

\begin{equation}
\label{R_k}
\mathscr{R}_{k}^{F}(g,\epsilon)
=
\inf
\left\{
\sum_{\mathbf{X}_{k}^{+}(z_{0};\boldsymbol{\alpha})_{\boldsymbol{j}} \in \mathcal{V}} \exp\!\left(
\sum_{i=0}^{k-1}
g\!\left(
\Pi_{i}\big(\mathbf{X}_{k}^{+}(z_{0};\boldsymbol{\alpha})_{\boldsymbol{j}}\big)
\right)
\right) : \mathcal{V}\subseteq \mathcal{P}_{k}^{F}(\widehat{\mathbb{C}})
\text{ is } (k,\epsilon)\text{-spanning}
\right\}.
\end{equation}

\begin{equation}
\label{S_k}
\mathscr{S}_{k}^{F}(g,\epsilon)
=
\sup
\left\{
\sum_{\mathbf{X}_{k}^{+}(z_{0};\boldsymbol{\alpha})_{\boldsymbol{j}} \in \mathcal{Y}}
\exp\!\left(
\sum_{i=0}^{k-1}
g\!\left(
\Pi_{i}\big(\mathbf{X}_{k}^{+}(z_{0};\boldsymbol{\alpha})_{\boldsymbol{j}}\big)
\right)
\right)
:
\mathcal{Y} \subseteq \mathcal{P}_{k}^{F}(\widehat{\mathbb{C}})
\text{ is } (k,\epsilon)\text{-separated}
\right\}.
\end{equation}

This helps in formulating the concept of topological pressure in the context of holomorphic correspondence as follows:

\begin{theorem} \cite{SS:2025}
\label{Pressure charac theorem}
Let $F$ be a holomorphic correspondence defined on $\widehat{\mathbb{C}}$, as represented in
Equation \eqref{correspondence}.
Then, for any function $g \in \mathcal{C}(\widehat{\mathbb{C}},\mathbb{R})$, the topological
pressure of $g$ with respect to $F$ is given by
\[
\mathrm{Pr}(g,F)
=
\lim_{\epsilon \to 0}
\left(
\limsup_{k \to \infty}
\frac{1}{k}
\log \mathscr{S}_{k}^{F}(g,\epsilon)
\right)
=
\lim_{\epsilon \to 0}
\left(
\limsup_{k \to \infty}
\frac{1}{k}
\log \mathscr{R}_{k}^{F}(g,\epsilon)
\right).
\]
\end{theorem}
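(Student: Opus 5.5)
Since the theorem defines $\mathrm{Pr}(g,F)$ through the separated-set quantity, the task is to show that the two limits agree. I would mimic Bowen's argument for maps and prove two inequalities. The first is that for every $\epsilon>0$ and $k$,
\[
\mathscr{R}_{k}^{F}(g,\epsilon) \le C_N\,\mathscr{S}_{k}^{F}(g,\epsilon),
\]
where $C_N$ depends only on how words are handled. The second is that
\[
\mathscr{S}_{k}^{F}(g,\epsilon) \le e^{k\tau(\epsilon)}\,\mathscr{R}_{k}^{F}(g,\epsilon/2),
\qquad
\tau(\epsilon)=\sup\{|g(x)-g(y)| : d_{\widehat{\mathbb{C}}}(x,y)\le \epsilon\}.
\]
Uniform continuity of $g$ on the compact sphere gives $\tau(\epsilon)\to 0$. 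Taking $\frac1k\log$, then $\limsup_k$, then $\epsilon\to0$ yields equality. Both limits exist because the inner quantities are monotone in $\epsilon$: shrinking $\epsilon$ makes fewer sets spanning and more sets separated.

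For the first inequality, I would take a maximal $(k,\epsilon)$-separated set $\mathcal{Y}$. Such a set is finite by compactness of $\mathcal{P}_{k}^{F}(\widehat{\mathbb{C}})$, and maximal sets exist, with the separated-set sum nearly attained along a maximizing sequence; since only an inequality is needed, I would argue directly with an arbitrary maximal $\mathcal{Y}$. Let $\mathbf{X}_{k}^{+}(z_{0};\boldsymbol{\alpha})_{\boldsymbol{j}}$ be any orbit. By maximality, adding it to $\mathcal{Y}$ destroys separation. Hence some element of $\mathcal{Y}$ has the same word $\boldsymbol{\alpha}$ and is $\epsilon$-close at every coordinate (with $\le\epsilon$ rather than $<\epsilon$). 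This is exactly the spanning condition of Definition \ref{spanning}, up to replacing $\epsilon$ by a slightly larger value. So $\mathcal{Y}$ is $(k,2\epsilon)$-spanning, which gives $\mathscr{R}_{k}^{F}(g,2\epsilon)\le \mathscr{S}_{k}^{F}(g,\epsilon)$. This is enough after taking $\epsilon\to0$.

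For the second inequality, I would take a $(k,\epsilon)$-separated set $\mathcal{Y}$ and a $(k,\epsilon/2)$-spanning set $\mathcal{V}$. For each $Y\in\mathcal{Y}$, choose $\phi(Y)\in\mathcal{V}$ with the same word that is $\epsilon/2$-close at every time, which Definition \ref{spanning} allows. The map $\phi$ is injective. If $\phi(Y)=\phi(Y')$, then $Y$ and $Y'$ share the word of $\phi(Y)$, and by the triangle inequality they are $\epsilon$-close at every index, contradicting separation. Along the matched pair the Birkhoff sums $\sum_{i=0}^{k-1}g\circ\Pi_i$ differ by at most $k\tau(\epsilon/2)$. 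Therefore
\[
\sum_{\mathcal{Y}} e^{S_k g} \le e^{k\tau(\epsilon/2)} \sum_{\mathcal{V}} e^{S_k g}.
\]
Taking the supremum over $\mathcal{Y}$ and the infimum over $\mathcal{V}$ gives the inequality, and then $\frac1k\log$ with limits finishes the argument.

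The main obstacle I expect is the interaction with the word component in the definitions. Separation also counts word differences, while spanning requires matching words. I need to check carefully that the maximality argument produces a neighbour with an identical word. This holds because non-separation means every $\mathrm{proj}_{i}$ coincides. I also need to handle the boundary between the strict inequality ($>\epsilon$) and the non-strict one ($\le\epsilon$), which I absorb by passing between $\epsilon$ and $2\epsilon$. Finiteness of separated sets follows because the orbits with a fixed word form a compact subset and there are only $N^k$ words. This keeps all quantities finite and the logarithms meaningful.
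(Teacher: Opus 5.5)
Your proof is correct. It is the standard Bowen comparison that the paper takes from \cite{SS:2025} without reproving it: a maximal $(k,\epsilon)$-separated family is spanning (the fact from Lemma~3.4 of \cite{SS:2025} that the paper cites in Proposition~\ref{prop}(a)), and any separated family injects into a $(k,\epsilon/2)$-spanning family via same-word partners, giving $\mathscr{R}_k^F(g,2\epsilon)\le\mathscr{S}_k^F(g,\epsilon)\le e^{k\tau(\epsilon/2)}\mathscr{R}_k^F(g,\epsilon/2)$. The constant $C_N$ you announce at the start is never needed, and passing to $2\epsilon$ correctly handles the strict versus non-strict inequality boundary, which the paper's claim that maximal $(k,\delta/2)$-separated families are $(k,\delta/2)$-spanning glosses over.
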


\begin{corollary} \cite{SS:2025}
    Let $F$ be a holomorphic correspondence defined on $\widehat{\mathbb{C}}$, as represented in Equation \eqref{correspondence}, then the topological entropy of the holomorphic correspondence $F$ is given by $h_{\mathrm{top}}(F) = \mathrm{Pr}(0,F)$. 
\end{corollary}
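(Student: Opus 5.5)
The corollary follows by specializing Theorem \ref{Pressure charac theorem} to $g \equiv 0$ and checking that the resulting quantity is exactly the Dinh--Sibony entropy of \cite{ds:2008}. With $g \equiv 0$, every summand $\exp\bigl(\sum_{i=0}^{k-1} g(\Pi_i(\cdot))\bigr)$ equals $1$. Hence $\mathscr{S}_{k}^{F}(0,\epsilon)$ is the supremum of the cardinalities of $(k,\epsilon)$-separated subsets of $\mathcal{P}_{k}^{F}(\widehat{\mathbb{C}})$, and $\mathscr{R}_{k}^{F}(0,\epsilon)$ is the infimum of the cardinalities of $(k,\epsilon)$-spanning subsets. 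Both quantities are finite.

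\begin{itemize}
\item For $\mathscr{R}_{k}^{F}(0,\epsilon)$, use compactness of $\mathcal{P}_{k}^{F}(\widehat{\mathbb{C}})$ together with finitely many words $\boldsymbol{\alpha}$: for each word, a finite $\epsilon$-net of the compact set $\bigcup_{z_0}\mathcal{P}^{\boldsymbol{\alpha}}_k(z_0)$ in the product metric gives a spanning set.
\item For $\mathscr{S}_{k}^{F}(0,\epsilon)$, a separated set meets each word-class in an $\epsilon$-separated set for the product metric. That set is finite by compactness, so the cardinalities are bounded.
\end{itemize}

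Next, recall that in \cite{ds:2008} the topological entropy of $F$ is defined as
\[
h_{\mathrm{top}}(F)=\lim_{\epsilon\to0}\limsup_{k\to\infty}\frac1k\log \max\{\#\mathcal{Y} : \mathcal{Y}\subseteq\mathcal{P}_{k}^{F}(\widehat{\mathbb{C}}) \text{ is } (k,\epsilon)\text{-separated}\},
\]
with $(k,\epsilon)$-separated taken in the sense of Definition \ref{separated}. This is literally $\lim_{\epsilon\to0}\limsup_{k}\frac1k\log\mathscr{S}_{k}^{F}(0,\epsilon)$, which by Theorem \ref{Pressure charac theorem} equals $\mathrm{Pr}(0,F)$. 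The spanning formula then gives the same value automatically, by the second equality in the theorem.

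The only point needing care is the limit in $\epsilon$. The map $\epsilon \mapsto \limsup_k \frac1k\log\mathscr{S}_k^F(0,\epsilon)$ is nonincreasing, because a $(k,\epsilon')$-separated set is $(k,\epsilon)$-separated for $\epsilon<\epsilon'$. So the limit exists in $[0,\infty]$, and it is the same limit on both sides of the identification. I expect no real obstacle. The main thing to check is that the notion of separated orbits (including the clause distinguishing different words $\boldsymbol{\alpha}$) and the normalization $\frac1k$ match exactly those of \cite{ds:2008}, so that no index shift between $k$ and $k+1$ points of the orbit affects the exponential growth rate. Any such shift changes $\frac1k\log$ only by a factor tending to $1$ and does not alter the limit.
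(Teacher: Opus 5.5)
Your proposal is correct and is the intended argument. The paper states this corollary from \cite{SS:2025} without proof, and its justification is exactly your specialization $g \equiv 0$ in Theorem \ref{Pressure charac theorem}: each weight becomes $\exp(0)=1$, so $\mathscr{S}_{k}^{F}(0,\epsilon)$ is the maximal cardinality of a $(k,\epsilon)$-separated family in the sense of Definition \ref{separated}, which is the Dinh--Sibony entropy of \cite{ds:2008}; your remarks on finiteness, monotonicity in $\epsilon$, and the $k$ versus $k+1$ normalization are sound bookkeeping that the citation leaves implicit.
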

 In this paper, we develop an alternative approach to define the pressure functional.

 \section {Open covers for holomorphic correspondence}

 Suppose, ${P} = \left\{ P_{1}, P_{2}, \cdots, P_{s} \right\}$ and $Q = \left\{ Q_{1}, Q_{2}, \cdots, Q_{s'} \right\}$ form a collection of subsets of
$\mathscr{P}^{\Gamma} \left(\widehat{\mathbb{C}} \right)$. 

We define
\[
P \bigvee Q
=
\left\{
P_{i} \cap Q_{i'}
:\ 
P_{i} \in \mathcal{P}
\ \text{and}\ 
Q_{i'} \in \mathcal{Q}
\ \text{for}\ 
1 \le i \le s,\ 
1 \le i' \le s'
\right\}.
\]
Now, let  $\mathcal{U}$ be an open cover for $\widehat{\mathbb{C}}$ and  $g \in \mathcal{C}(\widehat{\mathbb{C}},\mathbb{R})$. Then we define,

\begin{equation}
\label{M_k}
    \mathcal{M}_{k}^F(g,\mathcal{U}) = \inf_{\mathcal C} \left\{ \sum_{\gamma \in \mathcal C }\left( \inf_{\mathbf{X}_{k}^{+}(z_{0};\boldsymbol{\alpha})_{\boldsymbol{j}} \in \mathcal{\gamma}} \left\{\exp\!\left(
\sum_{i=0}^{k-1}
g\!\left(
\Pi_{i}\big(\mathbf{X}_{k}^{+}(z_{0};\boldsymbol{\alpha})_{\boldsymbol{j}}\big)
\right)\right) \right\} \right)   \right\}
\end{equation} and

\begin{equation}
    \label{N_k}
    \mathcal{N}_{k}^F(g,\mathcal{U}) = \inf_{\mathcal C} \left\{ \sum_{\gamma \in \mathcal C }\left( \sup_{\mathbf{X}_{k}^{+}(z_{0};\boldsymbol{\alpha})_{\boldsymbol{j}} \in \mathcal{\gamma}} \left\{\exp\!\left(
\sum_{i=0}^{k-1}
g\!\left(
\Pi_{i}\big(\mathbf{X}_{k}^{+}(z_{0};\boldsymbol{\alpha})_{\boldsymbol{j}}\big)
\right)\right) \right\} \right)   \right\}
\end{equation}
where  $\mathcal{C}$ denote a finite sub-cover of $\displaystyle\bigvee_{i=0}^{k-1} \left\{\Pi_{i}^{-1}(\mathcal{U} )\cap \text{proj}_{i+1}^{-1} \{j\} : 1 \leq j \leq N  \right\} \bigvee \Pi_{k}^{-1}(\mathcal{U}) $, in both Equations \eqref{M_k} and \eqref{N_k}.\\
  
One may recall that a positive number $\delta > 0$ is called a Lebesgue number for the open cover $\mathcal G$ of a metric space $(X,d)$ if for every point $x \in X$, there exists a set $G \in \mathcal{G}$ such that
$
B(x,\delta) \subset G,
$
where $B(x,\delta) = \{y \in X : d(x,y) < \delta\}$ is the open ball centered at $x$ with radius $\delta$.

Here, we state a proposition that relates the quantities defined in Equations \eqref{R_k},\eqref{S_k}, \eqref{M_k}, and \eqref {N_k}.
 
 \begin{proposition}
\label{prop}
 \begin{enumerate}[label=(\alph*)]
     \item If $\mathcal{U}$ is an open cover of $\widehat{\mathbb{C}}$ with  Lebesgue number $\delta$, then for $g \in \mathcal{C}(\widehat{\mathbb{C}},\mathbb{R})$, we have $\mathcal{M}_{k}^{F}(g,\mathcal{U}) \leq \mathscr{R}_{k}^{F}(g,\frac{\delta}{2}) \leq \mathscr{S}_{k}^{F}(g, \frac{\delta}{2}).$
\item If $\epsilon > 0$ and $\mathcal{Z}$ is an open cover of $\widehat{\mathbb{C}}$ with $diam(\mathcal{Z} )\leq \epsilon$, then for $g \in \mathcal{C}(\widehat{\mathbb{C}},\mathbb{R})$, we have $\mathscr{R}_{k}^{F}(g,\epsilon) \leq\mathscr{S}_{k}^{F}(g, \epsilon)
\leq \mathcal{N}_{k}^{F}(g,\mathcal{Z}).$
\end{enumerate}
\end{proposition}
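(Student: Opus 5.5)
We adapt the classical argument for maps (Walters, Prop. 9.7) to spaces of orbits. Write $S_kg(\mathbf{X}) = \sum_{i=0}^{k-1} g(\Pi_i(\mathbf{X}))$ for $\mathbf{X}\in\mathcal{P}_k^F(\widehat{\mathbb{C}})$. The middle inequality $\mathscr{R}_{k}^{F}(g,\epsilon)\le \mathscr{S}_{k}^{F}(g,\epsilon)$ in both parts comes from a maximal separated set.

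\textbf{Middle inequality.} Take a $(k,\epsilon)$-separated set $\mathcal{Y}$ that is maximal with respect to inclusion. We will use the sub-family of a finite $\epsilon$-net argument, so $\mathcal{Y}$ is finite by compactness of $\mathcal{P}_k^F(\widehat{\mathbb{C}})$. By maximality, any orbit $\mathbf{X}^{+}_k(z_0;\boldsymbol\alpha)_{\boldsymbol j}$ not in $\mathcal{Y}$ fails to be separated from some member of $\mathcal{Y}$. By Definition \ref{separated}, that member must carry the same word $\boldsymbol\alpha$ and be within $\epsilon$ at every coordinate $0\le i\le k$. Hence $\mathcal{Y}$ is $(k,\epsilon)$-spanning in the sense of Definition \ref{spanning}. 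This gives $\mathscr{R}_k^F\le \sum_{\mathcal{Y}} e^{S_kg}\le \mathscr{S}_k^F$.

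One caveat: maximality is with respect to inclusion, whereas $\mathscr{S}$ is a supremum of sums. Every separated set extends to a maximal one, and enlarging a set only increases the sum of the positive terms. So the supremum may be taken over maximal sets, each of which is spanning. (At the strict/non-strict boundary, $>\epsilon$ versus $<\epsilon$, a non-separated pair is within distance $\le\epsilon$. I would handle this by noting the paper's conventions and, if needed, replacing $\epsilon$ by a slightly larger value, which is harmless.)

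\textbf{Part (a), first inequality.} Let $\mathcal{V}$ be any $(k,\delta/2)$-spanning set; we may assume it is finite, or else the bound is trivial. For each $\mathbf{Y}\in\mathcal{V}$ with word $\boldsymbol\alpha$, the Lebesgue number gives $U_i\in\mathcal{U}$ with $B(\Pi_i(\mathbf{Y}),\delta)\subset U_i$ for $0\le i\le k$. Set
\[
\gamma_{\mathbf{Y}}=\bigcap_{i=0}^{k-1}\bigl(\Pi_i^{-1}(U_i)\cap \mathrm{proj}_{i+1}^{-1}\{\alpha_{i+1}\}\bigr)\cap \Pi_k^{-1}(U_k),
\]
which is an element of the joined cover. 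Every orbit is $\delta/2$-shadowed by some $\mathbf{Y}\in\mathcal{V}$ with the same word, so it lies in $\gamma_{\mathbf{Y}}$. Thus $\mathcal{C}=\{\gamma_{\mathbf{Y}}\}$ is a finite subcover. Since $\mathbf{Y}\in\gamma_{\mathbf{Y}}$, we have $\inf_{\gamma_{\mathbf{Y}}} e^{S_kg}\le e^{S_kg(\mathbf{Y})}$, so $\mathcal{M}_k^F(g,\mathcal{U})\le\sum_{\mathcal{V}}e^{S_kg}$. Taking the infimum over $\mathcal{V}$ gives the first inequality of (a).

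\textbf{Part (b), last inequality.} Let $\mathcal{Y}$ be $(k,\epsilon)$-separated and let $\mathcal{C}$ be any finite subcover of the join built from $\mathcal{Z}$. Each $\gamma\in\mathcal{C}$ has the form $\bigcap_i \Pi_i^{-1}(Z_i)\cap\mathrm{proj}_{i+1}^{-1}\{j_{i+1}\}\cap\Pi_k^{-1}(Z_k)$. Any two orbits in $\gamma$ have the same word and coordinates within $\mathrm{diam}(Z_i)\le\epsilon$, so they are not $\epsilon$-separated. Hence each $\gamma$ contains at most one point of $\mathcal{Y}$. Choosing for each $\mathbf{Y}\in\mathcal{Y}$ one $\gamma\ni\mathbf{Y}$ gives an injection $\mathcal{Y}\to\mathcal{C}$, and therefore
\[
\sum_{\mathcal{Y}}e^{S_kg}\le\sum_{\gamma\in\mathcal{C}}\sup_{\gamma}e^{S_kg}.
\]
Taking the supremum over $\mathcal{Y}$ and the infimum over $\mathcal{C}$ yields $\mathscr{S}_k^F\le\mathcal{N}_k^F$.

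\textbf{Main obstacle.} The combinatorial steps are routine. The delicate point is matching the word component exactly: spanning and non-separation both require the same word $\boldsymbol\alpha$. The join is constructed with $\mathrm{proj}_{i+1}^{-1}\{j\}$ precisely so that the cover elements fix the word, and this is what makes the arguments above go through. The other delicate point is the strict versus non-strict inequality convention at distance exactly $\epsilon$, addressed in the caveat above.
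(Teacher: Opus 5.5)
Your proposal follows essentially the same route as the paper: a Lebesgue number places the $\delta/2$-shadowing region of each spanning orbit inside one word-fixed element of the join, a maximal separated family is spanning, and a join element built from a cover of diameter at most $\epsilon$ holds at most one separated orbit. Your direct construction of $\gamma_{\mathbf{Y}}$ is slightly cleaner than the paper's detour through a Lebesgue number for $\mathcal{U}_k^F$, since that detour needs $\delta<1$ and a metric on the word coordinate.

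Your strict-versus-non-strict caveat is genuine, and the paper glosses over it by citing the earlier lemma. Take the identity correspondence with $g=0$ and $\epsilon=\mathrm{diam}\,\widehat{\mathbb{C}}$: then $\mathscr{R}_k^F(0,\epsilon)=2>1=\mathscr{S}_k^F(0,\epsilon)$, so the middle inequality cannot be proved as stated. Your enlargement of $\epsilon$ therefore does not prove the statement verbatim; it gives the correct weaker form $\mathscr{R}_k^F(g,\epsilon')\le\mathscr{S}_k^F(g,\epsilon)$ for $\epsilon'>\epsilon$. However, the outer bound $\mathcal{M}_k^F(g,\mathcal{U})\le\mathscr{S}_k^F(g,\delta/2)$ of (a), which is all the main theorem uses, holds exactly: run your $\gamma_{\mathbf{Y}}$ construction on a maximal $(k,\delta/2)$-separated family, since non-separation gives distance at most $\delta/2<\delta$.
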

\begin{proof}\begin{enumerate}[label=(\alph*)]
        \item   Given, $\mathcal{U}$ is an open cover of $\widehat{\mathbb{C}}$. Then for each $k \in \mathbb{Z}_+$, there exists an open cover $\mathcal{U}_{k}^F$ for $\mathcal{P}_{k}^{F}(\widehat{\mathbb{C}})$ given by  $$\mathcal{U}_{k}^F = \displaystyle\bigvee_{i=0}^{k-1} \left\{ \Pi_{i}^{-1}(\mathcal{U} )\cap \text{proj}_{i+1}^{-1} \{j\} : 1 \leq j \leq N  \right\} \bigvee  \Pi_{k}^{-1}(\mathcal{U}) .$$ 
 \\
Since $\delta$ is a Lebesgue number associated with the the open cover $\mathcal U$, for every
$z\in\widehat{\mathbb C}$ there exists $U\in\mathcal U$ such that the open ball of radius $\delta$ around $z$ is contained in $U$, i.e.,\ 
$
B(z,\delta)\subset U.
$ . We claim that $\delta$ is also a Lebesgue number associated with the open cover $\mathcal{U}_{k}^F$.  \\
Suppose  
$\mathbf{X}_{k}^{+}(z_{0};\boldsymbol{\alpha})_{\boldsymbol{j}}
=
\left( z_{0}, z^{(1)}_{j_{1}}, \cdots, z^{(k)}_{j_{k}};\; \alpha_{1}, \cdots, \alpha_{k} \right) 
\in \mathcal P^{F}_{k}(\widehat{\mathbb C}),$ then for $0\le i\le k$, we have $$\Pi_i\big(B(\mathbf{X}_{k}^{+}(z_{0};\boldsymbol{\alpha})_{\boldsymbol{j}},\delta)\big)\subset B(z^{(i)}_{j_{i}},\delta).$$
Hence there exists $U_i\in\mathcal U$ such that
$
\Pi_i\big(B(\mathbf{X}_{k}^{+}(z_{0};\boldsymbol{\alpha})_{\boldsymbol{j}}),\delta)\big)\subset U_i,
$ which implies $$
B(\mathbf{X}_{k}^{+}(z_{0};\boldsymbol{\alpha})_{\boldsymbol{j}},\delta)\subset \Pi_i^{-1}(U_i), \ \text{for} \ 0\leq i \leq k.$$
Hence, for $\delta < 1, $ there exists 
$
 U^F \in \displaystyle\bigvee_{i=0}^{k-1} \left\{ \Pi_{i}^{-1}(\mathcal{U} )\cap \text{proj}_{i+1}^{-1} \{j\} : 1 \leq j \leq N  \right\} \bigvee  \Pi_{k}^{-1}(\mathcal{U}) 
$ such that  
$$
B(\mathbf{X}_{k}^{+}(z_{0};\boldsymbol{\alpha})_{\boldsymbol{j}},\delta)
\subset
U^F. 
$$ Thus every ball of radius $\delta$ in
$\mathcal P^{F}_{k}(\widehat{\mathbb C})$
is contained in some element of $\mathcal U_K^{F}$. Thus, $\delta$ is also a Lebesgue number associated with the cover $\mathcal{U}_{k}^F$ \\
Suppose, $\mathcal{V}$ is a $(k,\frac{\delta}{2})$ spanning set of $\mathcal{P}_{k}^{F}(\widehat{\mathbb{C}})$, then we can write
        $$\mathcal{P}_{k}^{F}(\widehat{\mathbb{C}}) = \bigcup_{\mathbf{X}_{k}^{+}(z_{0};\boldsymbol{\alpha})_{\boldsymbol{j}} \in \mathcal{V}} \overline{B\left(\mathbf{X}_{k}^{+}(z_{0};\boldsymbol{\alpha})_{\boldsymbol{j}},\frac{\delta}{2}\right)} .
     $$
            Since $\delta$ is a Lebesgue number assosciated with the cover $\mathcal{U}_{k}^F$, each $\overline{B\left(\mathbf{X}_{k}^{+}(z_{0};\boldsymbol{\alpha})_{\boldsymbol{j}}),\frac{\delta}{2}\right)}$ is a subset of a member of $\mathcal{U}_{k}^F$ . This gives us, $$\mathcal{M}_{k}^{F}(g,\mathcal{U}) \leq \mathscr{R}_{k}^{F}(g,\frac{\delta}{2}).$$
Finally, one can see from the proof of Lemma $3.4$ in \cite{SS:2025},  that a $(k, \frac{\delta}{2})$ separated family of maximum cardinality is also a $(k, \frac{\delta}{2})$ spanning family of $\mathcal{P}_{k}^{F}(\widehat{\mathbb{C}})$ , and therefore we can conclude that $$\mathcal{M}_{k}^{F}(f,\mathcal{U}) \leq \mathscr{R}_{k}^{F}(f,\frac{\delta}{2}) \leq \mathscr{S}_{k}^{F}(f,\frac{\delta}{2}).$$
\item 
 Let $\mathcal{Y} $ be a $(k,\epsilon)$ separated subset of $\mathcal{P}_{k}^{F}(\widehat{\mathbb{C}})$.    Since, $diam(\mathcal{Z}) < \epsilon $, from the construction of corresponding $\mathcal{Z}_{k}^F$, we can see that $ diam (\mathcal{Z}_{k}^F) < \epsilon $. Then, no member of  $\mathcal{Z}_{k}^F$ contains two elements of $\mathcal{Y}$. Therefore, $$\mathscr{S}_{k}^{F}(g, \epsilon)
\leq \mathcal{N}_{k}^{F}(g,\mathcal{Z}).$$  Hence the proof.

    \end{enumerate}
\end{proof}
\begin{proposition} 
 \label{limit}
Let $g \in C(\widehat{\mathbb{C}},\mathbb{R})$ and let $\mathcal{U}$ be an open cover of 
$\widehat{\mathbb{C}}$. Then 
$$
\lim_{k\to\infty} \frac{1}{k}\log \mathcal{N}_{k}^{F}(g,\mathcal{U})
$$
exists and equals to 
$$
\inf_{k\ge 1} \frac{1}{k}\log \mathcal{N}_{k}^{F}(g,\mathcal{U}).
$$
\end{proposition}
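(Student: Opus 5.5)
We will prove that the sequence $a_k = \log \mathcal{N}_{k}^{F}(g,\mathcal{U})$ is subadditive, $a_{k+l} \le a_k + a_l$. The conclusion then follows from the standard Fekete lemma for subadditive sequences: $\lim_k a_k/k$ exists and equals $\inf_k a_k/k$. We need the limit to be finite or $-\infty$ but never $+\infty$. This is automatic, since $a_1$ is finite: a finite subcover exists by compactness of $\mathcal{P}_{1}^{F}(\widehat{\mathbb{C}})$, and $g$ is bounded. Each $\mathcal{N}_k^F$ is strictly positive because $\exp>0$ and $\mathcal{P}_k^F$ is nonempty. Also $a_k \ge k\min g$, so the limit is in fact finite.

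\textbf{Splitting map.} Fix $k,l\ge 1$. Define the truncation $T_1:\mathcal{P}_{k+l}^F(\widehat{\mathbb{C}})\to\mathcal{P}_{k}^F(\widehat{\mathbb{C}})$, which keeps the first $k$ steps (points $z_0,\dots,z^{(k)}$ and letters $\alpha_1,\dots,\alpha_k$). Define the shift $T_2:\mathcal{P}_{k+l}^F(\widehat{\mathbb{C}})\to\mathcal{P}_{l}^F(\widehat{\mathbb{C}})$, which keeps the tail starting at $z^{(k)}$ with letters $\alpha_{k+1},\dots,\alpha_{k+l}$. Both maps are continuous, and their images are admissible paths. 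They satisfy
\[
\Pi_i\circ T_1=\Pi_i \ (0\le i\le k),\qquad \Pi_i\circ T_2=\Pi_{i+k}\ (0\le i\le l),
\]
and similarly $\mathrm{proj}_{i+1}\circ T_1=\mathrm{proj}_{i+1}$ and $\mathrm{proj}_{i+1}\circ T_2=\mathrm{proj}_{i+k+1}$.

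\textbf{Product cover.} Let $\mathcal C$ and $\mathcal D$ be finite subcovers of the joins $\mathcal{U}_k^F$ and $\mathcal{U}_l^F$ respectively (notation as in the proof of Proposition \ref{prop}). The family $\{T_1^{-1}(\gamma)\cap T_2^{-1}(\eta):\gamma\in\mathcal C,\ \eta\in\mathcal D\}$ then covers $\mathcal{P}_{k+l}^F(\widehat{\mathbb{C}})$.

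We must check that each such set lies in a member of $\mathcal{U}_{k+l}^F$. This is the step needing care. A member $\gamma$ of $\mathcal{U}_k^F$ has the form
\[
\bigcap_{i<k}\bigl(\Pi_i^{-1}(U_i)\cap\mathrm{proj}_{i+1}^{-1}\{j_i\}\bigr)\cap\Pi_k^{-1}(U_k).
\]
A member $\eta$ of $\mathcal{U}_l^F$ pulls back under $T_2$ to
\[
\bigcap_{i<l}\bigl(\Pi_{i+k}^{-1}(V_i)\cap\mathrm{proj}_{i+k+1}^{-1}\{j'_i\}\bigr)\cap\Pi_{k+l}^{-1}(V_l).
\]
So the intersection is the member of $\mathcal{U}_{k+l}^F$ with sets $U_0,\dots,U_{k-1},V_0,\dots,V_l$, except that position $k$ carries the extra constraint $\Pi_k^{-1}(U_k)$. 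Hence it is contained in that member. Since $\mathcal N$ takes an infimum over subcovers, we may instead use the cover obtained by replacing each piece with the containing member of $\mathcal{U}_{k+l}^F$.

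The obstacle is that enlarging a set can increase its $\sup$. So I would instead compare directly: $\mathcal N$ is monotone under refinement in the right direction only if pieces shrink. The cleanest fix is to interpret the infimum in \eqref{N_k} over finite covers of $\mathcal{P}_k^F$ by subsets of members of the join. Equivalently, note that the subsets $T_1^{-1}(\gamma)\cap T_2^{-1}(\eta)$ are themselves exactly members of the join $\mathcal{U}_{k+l}^F\vee\Pi_k^{-1}(\mathcal U)$, and argue that this extra join does not affect $\mathcal N$. I expect this bookkeeping to be the main technical point. If needed, I would use the simplest argument available: since the sets are smaller, their $\sup$ is smaller, which is the favorable direction for an upper bound.

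\textbf{Weight estimate.} For $x\in T_1^{-1}(\gamma)\cap T_2^{-1}(\eta)$, the Birkhoff sum splits as
\[
\sum_{i=0}^{k+l-1}g(\Pi_i x)=\sum_{i=0}^{k-1}g(\Pi_iT_1x)+\sum_{i=0}^{l-1}g(\Pi_iT_2x).
\]
Therefore
\[
\sup_{T_1^{-1}\gamma\cap T_2^{-1}\eta}\exp(\cdot)\le \sup_\gamma\exp(\cdot)\,\sup_\eta\exp(\cdot).
\]
Summing over pairs $(\gamma,\eta)$ and then taking the infimum over $\mathcal C$ and $\mathcal D$ gives
\[
\mathcal N_{k+l}^F(g,\mathcal U)\le\mathcal N_k^F(g,\mathcal U)\,\mathcal N_l^F(g,\mathcal U).
\]
This is the desired subadditivity of $a_k$, and Fekete's lemma completes the proof.
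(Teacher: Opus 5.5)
\textbf{Verdict.} Your route is the paper's route, and it stops at the same unproved step. The plan is to concatenate finite subcovers for lengths $k$ and $l$, split the Birkhoff sum at time $k$, deduce $\mathcal{N}_{k+l}^{F}(g,\mathcal U)\le\mathcal{N}_{k}^{F}(g,\mathcal U)\,\mathcal{N}_{l}^{F}(g,\mathcal U)$, and apply Fekete's lemma. The submultiplicativity is not established in your proposal; the paper's proof has exactly the same hole.

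\textbf{Where it breaks.} The paper simply asserts that its family $\mathcal{O}\bigstar\mathcal{Y}$ is a finite subcover of $\mathcal{U}_{k+l}^F$. You correctly observe that this fails in general. The junction point $\Pi_k$ is constrained twice: by the terminal factor $\Pi_k^{-1}(U_k)$ of $\gamma$, and by the initial factor $\Pi_0^{-1}(V_0)$ of $\eta$. So $T_1^{-1}(\gamma)\cap T_2^{-1}(\eta)$ involves $U_k\cap V_0$, which need not lie in $\mathcal U$.

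You then leave this gap open, and the fallback you propose runs the wrong way:
\begin{itemize}
\item $\mathcal{N}_{k+l}^{F}(g,\mathcal U)$ is an infimum over subcovers of $\mathcal{U}_{k+l}^F$ only, so an upper bound requires exhibiting such a subcover.
\item Replacing each piece by a member of $\mathcal{U}_{k+l}^F$ that contains it can only \emph{increase} the supremum of $\exp(\sum_{i=0}^{k+l-1}g\circ\Pi_i)$.
\item The bound by $\sup_\gamma\cdot\sup_\eta$ is then lost. The natural replacement recovers it only up to a factor $e^{k\tau}$, where $\tau$ is the oscillation of $g$ at scale $\mathrm{diam}(\mathcal U)$.
\end{itemize}
Your other two suggestions change the quantity being estimated. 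One takes the infimum over refinements of the join; the other uses subcovers of $\mathcal{U}_{k+l}^F\vee\Pi_k^{-1}(\mathcal U)$. You give no argument that either leaves $\mathcal N$ unchanged.

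\textbf{A partial repair using your splitting maps.} Define $\widetilde{\mathcal N}_k$ like $\mathcal{N}_{k}^{F}(g,\mathcal U)$, but with the join $\bigvee_{i=0}^{k-1}\{\Pi_i^{-1}(\mathcal U)\cap\mathrm{proj}_{i+1}^{-1}\{j\}\}$, i.e.\ without the terminal factor $\Pi_k^{-1}(\mathcal U)$.
\begin{itemize}
\item Then $T_1^{-1}(\tilde\gamma)\cap T_2^{-1}(\tilde\eta)$ is \emph{exactly} a member of the $(k+l)$-join. Your weight estimate therefore gives $\widetilde{\mathcal N}_{k+l}\le\widetilde{\mathcal N}_k\widetilde{\mathcal N}_l$ verbatim.
\item Each $\pi_1|_{F_t}$ is surjective, so every path in $\mathcal P_k^F(\widehat{\mathbb C})$ extends by one step with any prescribed letter.
\item From this one checks $e^{\min g}\,\mathcal N_k^F(g,\mathcal U)\le\widetilde{\mathcal N}_{k+1}\le N e^{\max g}\,\mathcal N_k^F(g,\mathcal U)$.
\end{itemize}
Hence $\lim_k\frac1k\log\mathcal N_k^F(g,\mathcal U)$ exists, which is all Theorem \ref{PRESSURE THEOREM} actually uses. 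The equality of this limit with $\inf_k\frac1k\log\mathcal N_k^F(g,\mathcal U)$ does not follow, and would need a separate argument.

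\textbf{The case $g\equiv 0$.} Here your argument works as written. Replace each piece by the member of $\mathcal U_{k+l}^F$ with sets $U_0,\dots,U_{k-1},V_0,\dots,V_l$; this is harmless because only cardinalities matter.
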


 Here, we state a lemma that helps in proving the above proposition.
\begin{lemma} \cite{pw:1982}
    \label{sequence}
Let $(a_n)_{n \ge 1}$ be a sequence of real numbers such that $a_{n+p} \le a_n + a_p \quad \text{for all } \\ n,p \ge 1$.
Then the limit $ \displaystyle\lim_{n \to \infty} \frac{a_n}{n} $
exists and
$
\displaystyle\lim_{n \to \infty} \frac{a_n}{n}
=
\inf_{n \ge 1} \frac{a_n}{n}.
$
\end{lemma}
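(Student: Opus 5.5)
The plan is the classical argument for Fekete's lemma: fix a block length $p$, decompose an arbitrary index $n$ into blocks of length $p$ plus a short remainder, and use subadditivity to compare $a_n/n$ with $a_p/p$. Set $L=\inf_{n\ge1} a_n/n\in[-\infty,\infty)$. Trivially $a_n/n\ge L$ for every $n$, so $\liminf_{n\to\infty} a_n/n\ge L$. It therefore suffices to show
\[
\limsup_{n\to\infty}\frac{a_n}{n}\le \frac{a_p}{p}\quad\text{for every fixed } p\ge1.
\]
Taking the infimum over $p$ then gives $\limsup_{n} a_n/n\le L\le\liminf_{n} a_n/n$. So the limit exists in $[-\infty,\infty)$ and equals $\inf_{n\ge1} a_n/n$.

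First, by induction on $q$, the hypothesis $a_{n+p}\le a_n+a_p$ gives $a_{qp}\le q\,a_p$ for all $q\ge1$. It also gives $a_{qp+r}\le q\,a_p+a_r$ for $q\ge0$ and $1\le r\le p$, with the convention that the $q=0$ case is just $a_r\le a_r$. Now fix $p$. Because the sequence has no $a_0$ term, I write every $n\ge1$ uniquely as $n=qp+r$ with $q\ge0$ and $r\in\{1,\dots,p\}$, rather than $0\le r<p$. Let $C_p=\max_{1\le r\le p}|a_r|$, which is a finite constant depending only on $p$. Then
\[
\frac{a_n}{n}\le \frac{q}{n}\,a_p+\frac{C_p}{n}.
\]
As $n\to\infty$ we have $q\to\infty$ and $q/n\to 1/p$, since $qp\le n<(q+1)p$. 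Also $C_p/n\to0$. Hence $\limsup_n a_n/n\le a_p/p$, which is the claim. The sign of $a_p$ causes no trouble, because $q/n$ converges to the finite limit $1/p$.

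I do not expect a real obstacle here; the only points needing care are bookkeeping. One is the remainder convention $r\in\{1,\dots,p\}$, which avoids the undefined $a_0$. The other is the case $L=-\infty$: then the argument shows $\limsup_n a_n/n\le a_p/p$ for all $p$, where the $a_p/p$ are unbounded below, so the limit is $-\infty$. This agrees with the infimum, so the statement should be read with values in $[-\infty,\infty)$. In the application to Proposition \ref{limit}, $a_k=\log\mathcal{N}_{k}^{F}(g,\mathcal{U})$ is bounded below by $k\min g$, so the limit there is in fact finite.
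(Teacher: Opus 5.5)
Your argument is correct. It is the standard block-decomposition proof of Fekete's lemma, which is the proof in the reference the paper cites (the paper itself gives no proof), and your handling of the missing $a_0$, the sign of $a_p$ and the possible value $-\infty$ is sound. One cosmetic slip: with your convention $r\in\{1,\dots,p\}$ the bracketing should read $qp<n\le(q+1)p$ rather than $qp\le n<(q+1)p$, which still gives $q/n\to 1/p$.
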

Now, we give the proof for Proposition \ref{limit}.
\begin{proof}
To prove this proposition, it is enough to show that 
$$
\mathcal{N}^{F}_{n+k}(g,\mathcal{U})
\le
\mathcal{N}^{F}_{n}(g,\mathcal{U}) \cdot \mathcal{N}_{k}^{F}(g,\mathcal{U}),
$$ and the proof follows from  Lemma
\ref{sequence}. Let $\mathcal{O}$ and let $\mathcal{Y}$ be  finite subcovers of the spaces
\begin{align*}
&\left\{
\bigvee_{i=0}^{n-1}
\left\{
\Pi_i^{-1}(\mathcal{U}) \cap \operatorname{proj}_i^{-1}\{j\}
:\; 1 \le j \le N
\right\}
\;\bigvee\;
\Pi_{n}^{-1}(\mathcal{U})
\right\}
\\[6pt]
\text{and}\quad
&\left\{
\bigvee_{i=0}^{k-1}
\left\{
\Pi_i^{-1}(\mathcal{U}) \cap \operatorname{proj}_i^{-1}\{j\}
:\; 1 \le j \le N
\right\}
\;\bigvee\;
\Pi_{k}^{-1}(\mathcal{U})
\right\}.
\end{align*}
\text{respectively.} \\
Now, we define  for $O_i \in \mathcal{O} \ \text{and} \ Y_h \in \mathcal{Y}$ 
\begin{align*}
O_i \star Y_h = \{(z_{0},z_{j_1}^{(1)},...z_{j_n}^{(n)},z_{j_{n+1}}^{(n+1)},...,z_{j_{k+n}}^{(k+n)}: \alpha_0,\alpha_1,...\alpha_n,\alpha_{n+1},...\alpha_{k+n}):\\ (z_{0},z_{j_1}^{(1)},...z_{j_n}^{(n)}:\alpha_1,...\alpha_n) \in O_i \ \ \text{and} \ (z_{j_n}^{(n)},z_{j_{n+1}}^{(n+1)},...,z_{j_{k+n}}^{(k+n)};\alpha_{n+1},...\alpha_{k+n}) \in Y_h\}, 
\end{align*} 
It is important to see that the orbit family $O_i \star Y_h$ is valid only if the above condition holds between the appropriate orbits of $O_i$ and $Y_h$. With this we denote $$\mathcal{O} \bigstar \mathcal{Y} = \{O_i \star Y_h : O_i \in \mathcal{O} \ \text{and} \  Y_h \in \mathcal{Y}\}.$$ Then, we can observe that
$\mathcal{O} \bigstar \mathcal{Y}$ is a finite  sub-cover of $$\displaystyle \bigvee_{i=0}^{k+n-1} \left\{
\Pi_i^{-1}(\mathcal{U}) \cap \operatorname{proj}_i^{-1}\{j\}
:\; 1 \le j \le N
\right\} \bigvee \Pi_{k+n}^{-1}(\mathcal U). $$
This, in turn, shows
\begin{align}
&\sum_{ O_i \star Y_{j}\in \mathcal{O}\bigstar \mathcal{Y} }
\left( \sup_{\substack{
\mathbf{X}_{n+k}^{+}(z_{0};\boldsymbol{\alpha})_{\boldsymbol{j}}\\
\in O_i \star Y_{j}}}
\exp\left(
\sum_{r=0}^{n+k-1}
g\!\left(
\Pi_{r}
\big(
\mathbf{X}_{n+k}^{+}(z_{0};\boldsymbol{\alpha})_{\boldsymbol{j}}
\big)
\right)
\right) \right)
\nonumber\\
&\le
\left(
\sum_{ O_i\in \mathcal{O} }
\left( \sup_{\mathbf{X}_{n}^{+}(z_{0};\boldsymbol{\alpha}')_{\boldsymbol{j}'}
\in O_i}
\exp\left(
\sum_{r=0}^{n-1}
g\!\left(
\Pi_{r}
\big(
\mathbf{X}_{n}^{+}(z_{0};\boldsymbol{\alpha}')_{\boldsymbol{j}'}
\big)
\right)
\right)
\right)\right)
\nonumber\\
&\qquad \times
\left(
\sum_{ Y_{j}\in \mathcal{Y}}
\left(\sup_{\mathbf{X}_{k}^{+}(z_{n};\boldsymbol{\alpha}'')_{\boldsymbol{j}''}
\in Y_{\boldsymbol{j}}}
\exp\left(
\sum_{r=0}^{k-1}
g\!\left(
\Pi_{r}
\big(
\mathbf{X}_{k}^{+}(z_{n};\boldsymbol{\alpha}'')_{\boldsymbol{j}''}
\big)
\right)
\right)
\right) \right).
\end{align}

where 
\[
\mathbf{X}_{n+k}^{+}(z_{0};\boldsymbol{\alpha})_{\boldsymbol{j}}
=  \left( z_{0},z_{j_1}^{(1)},...z_{j_n}^{(n)},z_{j_{n+1}}^{(n+1)},...,z_{j_{k+n}}^{(k+n)} ; \alpha_0,\alpha_1,...\alpha_n,\alpha_{n+1},...\alpha_{k+n} \right)
\]
\[
\mathbf{X}_{n}^{+}(z_{0};\boldsymbol{\alpha'})_{\boldsymbol{j}'}
=
\left(z_{0},z_{j_1}^{(1)},...,z_{j_n}^{(n)}:
\alpha_{1},\dots,\alpha_{n} \right), \text{and}
\] 
\[
\mathbf{X}_{k}^{+}(z_{n};\boldsymbol{\alpha}'')_{\boldsymbol{j}''}
=
\left(z_{j_n}^{(n)},z_{j_{n+1}}^{(n+1)},...,z_{j_{k+n}}^{(k+n)};
\alpha_{n+1},\dots,\alpha_{k+n}\right).
\]

This in turn gives us $\mathcal{N}_{n+k}^F(g,\mathcal{U})
\le
\mathcal{N}_{n}^F(g,\mathcal{U}) \cdot \mathcal{N}_{k}^{F}(g,\mathcal{U})$
Hence, the proof.
\end{proof}

 Now, we move to the main theorem of the section.

 \begin{theorem}

 \label{PRESSURE THEOREM}
Let $F$ be a holomorphic correspondence on $\widehat{\mathbb{C}}$ represented as in Equation \eqref{correspondence} and let
$g \in \mathcal{C}(\widehat{\mathbb{C}},\mathbb{R})$. Then the topological pressure of $g$ with respect
to $F$ satisfies
\[
\mathrm{Pr}(g,F) =
\lim_{\delta \rightarrow{0} } \sup_{\mathcal U}
\lim_{k\to\infty}
\frac{1}{k}\log \mathcal{N}_k^F(g,\mathcal U)=\lim_{\delta \rightarrow{0} } \sup_{\mathcal U}
\lim_{k\to\infty}
\frac{1}{k}\log \mathcal{M}_k^F(g,\mathcal U),
\]
where the supremum is taken over all open covers $\mathcal U$ of $\widehat{\mathbb{C}}$ with $\mathrm{diam}(\mathcal{U}) < \delta$.
\end{theorem}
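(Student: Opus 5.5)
We prove the two equalities by squeezing both quantities between the separated/spanning quantities of Theorem \ref{Pressure charac theorem}, using Proposition \ref{prop} in both directions. Two preliminary observations are needed. First, $\mathcal{M}_k^F(g,\mathcal U)\le \mathcal N_k^F(g,\mathcal U)$ for every cover, since an infimum over a cover element is at most its supremum. Second, by Proposition \ref{limit} the limit in $k$ of $\frac1k\log\mathcal N_k^F$ exists. For $\mathcal M_k^F$ we will read $\lim_k$ as $\limsup_k$, or else establish existence a posteriori from the squeeze; all inequalities below are stated with $\limsup$ and $\liminf$ so that either reading works.

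\textbf{Upper bound for $\mathcal M$ and lower bound for $\mathcal N$.}

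Fix $\delta>0$ and an open cover $\mathcal U$ with $\mathrm{diam}(\mathcal U)<\delta$. By Proposition \ref{prop}(b) with $\epsilon=\delta$, we have $\mathscr S_k^F(g,\delta)\le \mathcal N_k^F(g,\mathcal U)$. Hence
\[
\limsup_k \tfrac1k\log\mathscr S_k^F(g,\delta)\le \lim_k\tfrac1k\log \mathcal N_k^F(g,\mathcal U).
\]
Taking the supremum over such $\mathcal U$ and letting $\delta\to0$ gives $\mathrm{Pr}(g,F)\le \lim_{\delta}\sup_{\mathcal U}\lim_k\frac1k\log\mathcal N_k^F$.

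In the other direction, every open cover $\mathcal U$ of the compact sphere has a Lebesgue number $\eta(\mathcal U)>0$. Moreover $\eta(\mathcal U)\le \mathrm{diam}(\mathcal U)<\delta$, since a ball of radius $\eta$ must fit inside some member of the cover. Proposition \ref{prop}(a) then gives $\mathcal M_k^F(g,\mathcal U)\le \mathscr R_k^F(g,\eta/2)$. Using that $\epsilon\mapsto \limsup_k\frac1k\log\mathscr R_k^F(g,\epsilon)$ is nonincreasing in $\epsilon$ and converges to $\mathrm{Pr}(g,F)$ as $\epsilon\to0$, we get
\[
\limsup_k\tfrac1k\log\mathcal M_k^F(g,\mathcal U)\le \mathrm{Pr}(g,F)\quad\text{for every }\mathcal U .
\]
So the $\mathcal M$-side is at most $\mathrm{Pr}(g,F)$.

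\textbf{The remaining inequalities.}

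It remains to show that the $\mathcal N$-side is at most $\mathrm{Pr}(g,F)$ and the $\mathcal M$-side is at least $\mathrm{Pr}(g,F)$. This is the main obstacle, because Proposition \ref{prop} does not directly compare $\mathcal N$ with spanning sets, or $\mathcal M$ with separated sets. Here the continuity of $g$ on the compact sphere enters. Let
\[
\tau(\delta)=\sup\{|g(x)-g(y)| : d(x,y)\le\delta\},
\]
so that $\tau(\delta)\to0$ as $\delta\to0$. For any element $\gamma$ of the join cover $\mathcal U_k^F$ with $\mathrm{diam}(\mathcal U)<\delta$, every two orbits in $\gamma$ have their $i$-th coordinates within $\delta$ of each other, for each $i$. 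Their Birkhoff sums $\sum_{i=0}^{k-1}g(\Pi_i(\cdot))$ therefore differ by at most $k\tau(\delta)$, so
\[
\mathcal N_k^F(g,\mathcal U)\le e^{k\tau(\delta)}\mathcal M_k^F(g,\mathcal U).
\]
Combined with the previous step this yields
\[
\sup_{\mathcal U}\lim_k\tfrac1k\log\mathcal N_k^F\le \tau(\delta)+\mathrm{Pr}(g,F)\to \mathrm{Pr}(g,F).
\]
Symmetrically,
\[
\mathcal M_k^F(g,\mathcal U)\ge e^{-k\tau(\delta)}\mathcal N_k^F(g,\mathcal U)\ge e^{-k\tau(\delta)}\mathscr S_k^F(g,\delta).
\]
This gives the $\mathcal M$-side lower bound $\mathrm{Pr}(g,F)-\lim_\delta\tau(\delta)=\mathrm{Pr}(g,F)$.

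Both $\mathcal N$ and $\mathcal M$ are thus squeezed to $\mathrm{Pr}(g,F)$, so the limits in $\delta$ exist. The convergence of the $\mathcal M$ quantity in $k$ follows up to the $\tau(\delta)$ error from Proposition \ref{limit}, and that error vanishes as $\delta\to0$.
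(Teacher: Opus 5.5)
Your proposal is correct and follows essentially the same route as the paper: Proposition~\ref{prop}(b) gives $\mathrm{Pr}(g,F)$ as a lower bound for the $\mathcal N$-side, Proposition~\ref{prop}(a) bounds the $\mathcal M$-side above, and the comparison $\mathcal M_k^F(g,\mathcal U)\le\mathcal N_k^F(g,\mathcal U)\le e^{k\tau}\mathcal M_k^F(g,\mathcal U)$ closes the squeeze. Your write-up is slightly more careful than the paper's on two points: you use the monotonicity of $\epsilon\mapsto\limsup_k\frac{1}{k}\log\mathscr R_k^F(g,\epsilon)$ rather than letting the Lebesgue number stand in for $\delta$, and you handle the $\mathcal M$-limit via $\limsup$/$\liminf$, since Proposition~\ref{limit} does not give its existence.
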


\begin{proof}
Let $\delta>0$ and let $\mathcal{U}$ be an open cover of $\widehat{\mathbb{C}}$ such that
$\mathrm{diam}(\mathcal{U}) < \delta$. By Proposition \ref{prop}(b), we have for every
$k \geq 1$,
\[
\mathscr{S}_k^F(g,\delta)
\le
\mathcal{N}^F(g,\mathcal U).
\]
This implies,
\[
\lim_{k\to\infty}
\frac{1}{k}\log \mathscr{S}_k^F(g,\delta)
\le
\lim_{k\to\infty}
\frac{1}{k}\log \mathcal{N}_k^F(g,\mathcal U).
\]
Taking the supremum over all open covers $\mathcal{U}$ with
$\mathrm{diam}(\mathcal U) < \delta$ and then letting $\delta\to0$, we get
\[
\mathrm{Pr}(g,F)
\le
\lim_{\delta \to 0}\sup_{\mathcal U}
\lim_{k\to\infty}
\frac{1}{k}\log \mathcal{N}_k^F(g,\mathcal U).
\]

Conversely, let $\mathcal U$ be an arbitrary open cover of $\widehat{\mathbb{C}}$ and let
$\delta>0$ be a Lebesgue number associated with $\mathcal{U}$. By
Proposition \ref{prop}(a), for every $k\ge1$,
\[
\mathcal M_k^F(g,\mathcal U)
\le
\mathscr S_k^F\!\left(g,\frac{\delta}{2}\right).
\]
Now, if 
$\tau := \sup \{ |g(z)-g(w)| : d(z,w) \le \operatorname{diam}(\mathcal U) < 1\}  $\ , then 

for two $k$- long orbits say, $ \mathbf{X}_{k}^{+}(z_{0};\boldsymbol{\alpha})_{\boldsymbol{j}} \ \text{and} \ \mathbf{X}_{k}^{+}(w_{0};\boldsymbol{\alpha})_{\boldsymbol{h}} $ lying in the same set in the finite subcover of corresponding $\mathcal{U}^F_K$, we have
\[
\left|
\sum_{r=0}^{k-1}
g\!\left(\Pi_r(\mathbf{X}_{k}^{+}(z_{0};\boldsymbol{\alpha})_{\boldsymbol{j}})\right)
-
\sum_{r=0}^{k-1}
g\!\left(\Pi_r(\mathbf{X}_{k}^{+}(w_{0};\boldsymbol{\alpha})_{\boldsymbol{h}})\right)
\right|
\le k\tau.
\]

By incorporating the appropriate factors and limits, we get, 
\begin{equation}
    \label{inequality}
\mathcal N_k^F(g,\mathcal U)
\le\exp
({k\tau})
\mathcal M_k^F(g,\mathcal U).
\end{equation}
Consequently,
$$
\sup_{\mathcal U}
\lim_{k\to\infty}
\frac{1}{k}\log \mathcal{N}_k^F(g,\mathcal U) 
\le \tau +\sup_{\mathcal U}
\lim_{k\to\infty}
\frac{1}{k}\log \mathcal{M}_k^F(g,\mathcal U) \le
\tau
+
\lim_{k\to\infty}\frac{1}{k}
\log
\mathscr S_k^F\!\left(g,\frac{\delta}{2}\right),
$$
where the supremum is taken over all open covers $\mathcal U$ of $\widehat{\mathbb{C}}$ with $\mathrm{diam}(\mathcal{U}) < \delta$.\\
As $\delta \rightarrow{0} , \ \text{we get} \  \tau \rightarrow{0}$ and hence,
\[
\lim_{\delta \rightarrow{0} } \sup_{\mathcal U}
\lim_{k\to\infty}
\frac{1}{k}\log \mathcal{N}_k^F(g,\mathcal U) \le \mathrm{Pr}(g,F). 
\]

Hence, we get 
\begin{equation}
\label{pressure form}
\mathrm{Pr}(g,F) =
\lim_{\delta \rightarrow{0} } \sup_{\mathcal U}
\lim_{k\to\infty}
\frac{1}{k}\log \mathcal{N}_k^F(g,\mathcal U) 
\end{equation}

Now, from Equations \eqref{M_k}, \eqref{N_k} and \eqref{inequality} and ,we have \[
\exp
({-k\tau})\mathcal N_k^F(g,\mathcal U)
\le
\mathcal M_k^F(g,\mathcal U)\le \mathcal N_k^F(g,\mathcal U)
\]
This gives,
$$ 
-\tau + \lim_{k\to\infty}
\frac{1}{k}\log \mathcal{N}_k^F(g,\mathcal U) \le \lim_{k\to\infty}
\frac{1}{k}\log \mathcal{M}_k^F(g,\mathcal U) \le \lim_{k\to\infty}
\frac{1}{k}\log \mathcal{N}_k^F(g,\mathcal U) 
$$

Then, from Sandwich Theorem and Equation \eqref{pressure form}, we get 
$$
\mathrm{Pr}(g,F) =
\lim_{\delta \rightarrow{0} } \sup_{\mathcal U}
\lim_{k\to\infty}
\frac{1}{k}\log \mathcal{N}_k^F(g,\mathcal U)=\lim_{\delta \rightarrow{0} } \sup_{\mathcal U}
\lim_{k\to\infty}
\frac{1}{k}\log \mathcal{M}_k^F(g,\mathcal U),
$$
where the supremum is taken over all open covers $\mathcal U$ of $\widehat{\mathbb{C}}$ with $\mathrm{diam}(\mathcal{U}) < \delta$.
\end{proof}

 \begin{corollary}
     Let $F$ be a holomorphic correspondence on $\widehat{\mathbb{C}}$ represented as in Equation \eqref{correspondence}, then the topological entropy of $F$ denoted by $h_\mathrm{top}(F)$ is given by
$$
h_\mathrm{top}(F) = \lim_{\delta \rightarrow{0} } \sup_{\mathcal U}
\lim_{k\to\infty}
\frac{1}{k}\log \mathcal{N}_k^F(0,\mathcal U) =  \lim_{\delta \rightarrow{0} } \sup_{\mathcal U}
\lim_{k\to\infty}
\frac{1}{k}\log \max \{\text{ \# \ of a finite subcover of} \ \  \mathcal{U}_{k}^F \}
$$
where the supremum is taken over all open covers $\mathcal U$ of $\widehat{\mathbb{C}}$ with $\mathrm{diam}(\mathcal{U}) \leq\delta$.
     \\
     \end{corollary}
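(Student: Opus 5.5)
The first equality is Theorem \ref{PRESSURE THEOREM} applied with $g \equiv 0$. The corollary of \cite{SS:2025} gives $h_{\mathrm{top}}(F) = \mathrm{Pr}(0,F)$, and substituting $g=0$ into Equation \eqref{pressure form} yields
\[
h_{\mathrm{top}}(F) = \lim_{\delta\to 0}\sup_{\mathcal U}\lim_{k\to\infty}\frac1k\log \mathcal N_k^F(0,\mathcal U).
\]
Here $\lim_{k}$ exists by Proposition \ref{limit}. The only point to check is that the theorem was stated with $\mathrm{diam}(\mathcal U)<\delta$, while the corollary uses $\mathrm{diam}(\mathcal U)\le\delta$. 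The supremum over covers with $\mathrm{diam}\le\delta$ is sandwiched between the suprema over covers with $\mathrm{diam}<\delta$ and with $\mathrm{diam}<\delta'$ for any $\delta'>\delta$. Both of these tend to the same limit as $\delta\to0$, because the supremum is monotone in $\delta$. So the limit is unchanged.

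For the second equality, compute $\mathcal N_k^F(0,\mathcal U)$ directly. With $g=0$, every exponential $\exp\big(\sum_{i=0}^{k-1} 0\big)$ equals $1$. Hence for each finite subcover $\mathcal C$ of $\mathcal U_k^F$ the inner sum $\sum_{\gamma\in\mathcal C}\sup_{\gamma}1$ is just $\#\mathcal C$, provided we discard empty members (these are the inadmissible intersections mentioned after the definition of $O_i\star Y_h$). Therefore $\mathcal N_k^F(0,\mathcal U)=\mathcal M_k^F(0,\mathcal U)$ is the minimal cardinality of a finite subcover of $\mathcal U_k^F$. This is the analogue of $N(\bigvee_{i} T^{-i}\mathcal U)$ in the map case.

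The main obstacle is the wording ``$\max\{\#$ of a finite subcover$\}$''. Read literally, a maximum over finite subcovers is either the cardinality of $\mathcal U_k^F$ itself (if $\mathcal U$ is finite) or infinite, which is not what the formula intends. I would therefore interpret this quantity as the minimal cardinality $N(\mathcal U_k^F)$, which is exactly what the computation above produces. With that reading the identity is immediate. If one insists on the literal maximum for finite $\mathcal U$, I would try to argue that $\#\mathcal U_k^F\le (N\,\#\mathcal U)^{k+1}$, but this only gives an upper bound and not equality, so I would not pursue that reading. Finally, compactness of $\mathcal P_k^F(\widehat{\mathbb C})$ ensures that finite subcovers exist, so all the quantities above are finite.
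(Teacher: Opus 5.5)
Your proposal is correct and is essentially the argument the paper leaves implicit. You set $g\equiv 0$ in Theorem~\ref{PRESSURE THEOREM}, use $h_{\mathrm{top}}(F)=\mathrm{Pr}(0,F)$, and note that $\mathcal N_k^F(0,\mathcal U)$ reduces to the minimal cardinality of a finite subcover of $\mathcal U_k^F$; your handling of $\mathrm{diam}(\mathcal U)\le\delta$ versus $<\delta$ is a sound extra detail. Your reading of ``$\max$'' as ``$\min$'' is not merely convenient but necessary, because the literal maximum fails even for finite covers: for an irrational rotation $R$ of $\widehat{\mathbb C}$ (zero entropy), take a small disk $U$ containing the fixed point $0$ but not centred there, together with a rotation-invariant disk containing every $R^{-i}U$; intersecting the $R^{-i}U$, $0\le i\le k$, over different nonempty index sets then gives $2^{k+1}-1$ distinct nonempty members of $\mathcal U_k^F$.
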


\textbf{Acknowledgment:} I would like to express my sincere gratitude to my thesis supervisor, Dr. Shrihari Sridharan, for the valuable discussions on the subject.

\end{document}